\theoremstyle{definition}
\newtheorem{thm}{Theorem}[section]
\newtheorem*{prop*}{Proposition}
\newtheorem{rmk}[thm]{Remark}
\newcommand{\R}{\mathbb{R}}
\newcommand{\C}{\mathbb{C}}
\newcommand{\Z}{\mathbb{Z}}
\newcommand{\om}{\omega}
\newcommand{\la}{\lambda}
\newcommand{\eps}{\varepsilon}
\newcommand{\Ham}{\mrm{Ham}}
\newcommand{\id}{\mrm{id}}
\def\mrm#1{{\mathrm{#1}}}
\renewcommand{\leq}{\leqslant}
\renewcommand{\geq}{\geqslant}
\newcommand{\esemail}{{egor.shelukhin@umontreal.ca}}
\newcommand{\fbemail}{{filipvbrocic@gmail.com}}
\begin{document}

\title{A counterexample to {L}agrangian {P}oincar\'e recurrence}

\author{Filip Bro\'ci\'c$^1$ and Egor Shelukhin$^2$}

\footnotetext[1]{Lehrstuhl für Analysis und Geometrie, Universität Augsburg, Universitätsstrasse 14, 86159 Augsburg, Germany,\vspace{0.2cm} \fbemail}
\footnotetext[2]{D\'epartement de Math\'ematiques et de Statistique, Universit\'e de Montr\'eal, C.P. 6128 Succ.  Centre-Ville Montreal, QC H3C 3J7, Canada, \esemail}




\begin{abstract}
We provide a counterexample to the Lagrangian Poincar\'e recurrence conjecture of Ginzburg and Viterbo in all dimensions $6$ and greater.\end{abstract}

\maketitle

\section{Introduction}

The well-known Lagrangian Poincar\'e recurrence conjecture (LPR) in symplectic topology and Hamiltonian dynamics states that for every closed manifold $(M,\om),$ closed Lagrangian submanifold $L$ of $M,$ and Hamiltonian diffeomorphism $\varphi \in \Ham(M,\om)$ of $M,$ there should exist an integer sequence $k_i \to \infty$ such that $\varphi^{k_i}(L) \cap L \neq \emptyset$ for all $i.$ We refer to \cite{GG-ai} for a detailed discussion of this conjecture put forth independently by Ginzburg and by Viterbo. It is related to an idea of Hofer and Polterovich that certain subsets of symplectic manifolds should be superrecurrent under Hamiltonian mappings: more recurrent than suggested by ergodic theory. In dimension two, LPR holds immediately for considerations of area, and is indeed a weaker version of the Poincar\'e recurrence theorem. In dimension $4,$ this conjecture was established in \cite{PSh} for certain product Lagrangian tori $L$ inside $S^2 \times S^2$ with a split symplectic form $\om = A\sigma \oplus a\sigma,$ $\int_{S^2} \sigma = 1,$ for suitable areas $a< A$ and all $\varphi \in \Ham(M,\om).$ It was also established for every closed Lagrangian $L \subset \C P^n$ with respect to Hamiltonian pseudo-rotations $\varphi \in \Ham(\C P^n)$ of projective spaces in \cite{GG-invent} (see also \cite{JS}). In this paper, we disprove the LPR conjecture for a class of irrational Lagrangian tori in every symplectic manifold of dimension $6$ and greater. Let $\Ham_c(M,\om)$ denote the group of compactly supported Hamiltonian diffeomorphisms of $(M,\om).$

\begin{thm}\label{thm: main}
For every symplectic manifold $M$ of dimension at least $6$, there exists a closed Lagrangian submanifold $L \subset M$ and a Hamiltonian diffeomorphism $\varphi \in \Ham_c(M,\omega)$ such that $$\varphi^k(L) \cap L = \emptyset  \text{ for all } k \geq 1.$$
\end{thm}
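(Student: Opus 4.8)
The plan is to reduce the statement to a purely local construction and then to build, inside a Darboux ball, a single Hamiltonian diffeomorphism whose forward orbit carries a fixed Lagrangian torus through an infinite sequence of tori all missing the original one. First I would exploit the compact-support hypothesis: by Darboux's theorem every $M$ of dimension $2n \geq 6$ contains an open set symplectomorphic to a ball $B \subset (\R^{2n}, \om_{\mrm{std}})$ with $n \geq 3$, and any $\varphi \in \Ham_c(B)$ extends by the identity to an element of $\Ham_c(M,\om)$ while preserving all intersection relations. Thus it suffices to produce $L \subset B$ and $\varphi \in \Ham_c(B)$ with $\varphi^k(L)\cap L = \emptyset$ for all $k \geq 1$. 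The dimension restriction $n \geq 3$ should enter precisely here, since the phenomenon is known to fail for the relevant tori in dimension four by the cited results of \cite{PSh} and \cite{GG-invent}.

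The mechanism I would aim for is an \emph{irrational} one. The guiding picture is a circle's worth of pairwise disjoint Lagrangian tori $\{L_\al\}_{\al \in \R/\Z}$ together with a map $\varphi$ acting on this family as the rotation $\al \mapsto \al + \theta$ with $\theta$ irrational; then $\varphi^k(L_0) = L_{k\theta}$ is disjoint from $L_0$ because $k\theta \notin \Z$ for every $k \geq 1$. The role of irrationality is exactly to rule out an exact return $\varphi^k(L) = L$, and this is where a number-theoretic choice of the areas of the generating circles of the torus (a Liouville-type condition on $\theta$) should enter.

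The hard part will be that this naive picture is obstructed by Lagrangian intersection theory and cannot be realized literally. Indeed $\varphi^k \in \Ham$, so every $\varphi^k(L)$ is Hamiltonian isotopic to $L$ and therefore has vanishing Lagrangian flux relative to $L$; if some $\varphi^k(L)$ were $C^1$-close to $L$, then in a Weinstein neighbourhood it would be the graph of an \emph{exact} one-form on $T^n$, whose zeros (at least $n+1$ by Morse theory) would force $\varphi^k(L) \cap L \neq \emptyset$. Since $\varphi$ is compactly supported, the orbit $\{\varphi^k(L)\}$ stays in a fixed compact set and so must accumulate in the $C^0$ (Hausdorff) sense, and one cannot prevent the iterates from returning arbitrarily close to $L$. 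The whole difficulty is therefore to guarantee that these unavoidable $C^0$-close returns are never $C^1$-close: the tori may come back Hausdorff-near $L$ only while their derivatives blow up, so that they are never graphs and the intersection argument never applies. To arrange this I would build $\varphi$ by an Anosov–Katok, approximation-by-conjugation scheme, $\varphi = \lim_m g_m^{-1} R_{\al_m} g_m$, where the $R_{\al_m}$ are rotations with rational $\al_m \to \theta$ and the $g_m$ are Hamiltonian diffeomorphisms conjugating the round family into increasingly sheared ones; the conjugations inject the required derivative growth, while the fast approximation keeps control of finitely many iterates at each stage. Verifying that the limit is a smooth compactly supported Hamiltonian diffeomorphism and that \emph{every} iterate $\varphi^k(L)$ genuinely avoids $L$ — reconciling the $C^0$-recurrence forced by compactness with permanent disjointness — is the crux of the argument.
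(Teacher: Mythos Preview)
Your Darboux reduction is correct and matches the paper. But the dynamical mechanism you propose --- an irrational rotation on a circle family of tori --- is the wrong picture, and the Anosov--Katok scheme you sketch to rescue it is neither needed nor clearly workable.

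First, a logical slip: you assert that compactness forces the iterates $\varphi^k(L)$ to return arbitrarily $C^0$-close to $L$. Compactness only forces accumulation \emph{somewhere}; there is no reason the accumulation set should contain $L$. In the paper's construction the forward orbit in fact converges in the Hausdorff sense to a torus $L_0$ which is \emph{not} Hamiltonian isotopic to $L$, so your exact-graph obstruction simply never applies.

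Second, and more importantly, the paper uses a completely different dynamical model: not an irrational rotation on a one-dimensional parameter but a \emph{hyperbolic linear map} on a two-dimensional parameter. Following Chekanov, there is a family of Lagrangian tori $L_w \subset \R^{2n}$ parametrized by $w$ in a small disk $\Delta \subset \R^2$, together with $\varphi \in \Ham_c(\R^{2n})$ satisfying $\varphi(L_w) = L_{Bw}$ for all $w \in \Delta$, where $B \in GL(2,\Z)$ is hyperbolic. The existence of such a $\varphi$ is the non-trivial input, supplied by \cite[Theorem~3.8]{Chek}: for any $A \in GL(k,\Z)$ the induced map $(A \times \id)^*$ on a Weinstein neighbourhood $D^*_\eps T^k \times D^*_\eps T^m$ of a suitable torus extends to a compactly supported Hamiltonian diffeomorphism of $\R^{2n}$. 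Taking $e \in \Delta$ a contracting eigenvector of $B$ with eigenvalue $\la \in (0,1)$ and $L = L_e$, one gets $\varphi^k(L) = L_{\la^k e}$ by induction, and these are pairwise disjoint because the points $\la^k e$ are distinct. No limiting procedure, no derivative blow-up, no Liouville conditions.

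This is also where the dimension restriction actually enters (not at the Darboux step, which works in every dimension): one needs a $2$-parameter family of tori to carry a hyperbolic $B \in GL(2,\Z)$, so $k = 2$ and $m = n-2 \geq 1$, hence $n \geq 3$. Your one-parameter rotation picture cannot escape the closing-up problem you correctly identify, precisely because every orbit of an irrational circle rotation is minimal and so \emph{does} return arbitrarily close to its start; a contracting orbit on a disk does not.
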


It appears therefore that in higher dimensions, if the Lagrangian Poincar\'e recurrence conjecture may hold, then it should hold for the class of Bohr-Sommerfeld Lagrangians, or at least rational ones in the sense that $\om$ has, up to rescaling, integral periods on disks in $\pi_2(M,L).$ Forthcoming work \cite{BPS} shall make new progress in this direction. Finally, in dimension $4,$ techniques of \cite{PSh}, related to Lagrangian Floer theory in orbifolds, should be applicable in greater generality. We expect these two directions of research to complement Theorem \ref{thm: main} by providing positive results regarding LPR.

Our construction relies on the work \cite{Chek, CS-tori} brought to our attention by \cite{CL24, R24}, where \cite[Theorem 1.1(ii)]{CS-tori} was used to prove the existence of Hamiltonian diffeomorphisms bringing a given Lagrangian $L$  to an arbitrarily small Weinstein neighborhood of $L$ without intersecting $L$ and hence not being exact in this neighborhood. 

The idea of the proof of Theorem \ref{thm: main} is that of creating ``linear chaos" related to a hyperbolic matrix in $GL(2,\Z)$ ``represented" by a Hamiltonian diffeomorphism. We now outline our approach in the case of the standard symplectic $\R^{2n}.$ Following Chekanov \cite{Chek}, we exhibit a family of Lagrangian tori $L_w$ in $\R^{2n}$ parametrized by $w$ in a disk $\Delta'$ around the origin in $\R^2$ and a Hamiltonian diffeomorphism $\varphi \in \Ham_c(\R^{2n})$ such that for all $w$ in a small sub-disk $\Delta$ in $\Delta',$ $\varphi(L_w) = L_{Bw}$ where $B \in GL(2,\Z)$ is a hyperbolic matrix. Picking a contracting eigenvector $e \in \Delta,$ our desired example is $L=L_e$ as by induction $\varphi^k(L_e) = L_{\la^k e},$ $B^k e =\la^k e \in \Delta,$ for all $k \geq 0$ and these Lagrangian submanifolds are pairwise disjoint.

\begin{rmk}
Theorem \ref{thm: main} also provides an example of a Lagrangian submanifold $L$ with an infinite packing number: namely the existence of Hamiltonian diffeomorphisms $\varphi_k \in \Ham_c(M,\om),$ $k\geq 1,$ such that $\varphi_k(L)$ are all pairwise disjoint. Indeed, one need only take $\varphi_k = \varphi^k$ for $\varphi \in \Ham_c(M,\om)$ provided by the theorem. Existence of such examples in dimension at least $6$ is also an immediate consequence of \cite[Theorem 1.1(ii)]{CS-tori} (see \cite[Section 1.3]{B}). However, it seems generally easier to produce infinitely packable Lagrangians than counterexamples to LPR. For instance, as communicated to us by Leonid Polterovich, infinitely packable Lagrangians in dimension $4$ were constructed by Brendel and Kim \cite{B, BK}. However, at the time of writing, we were not able to obtain counterexamples to LPR in dimension $4.$
\end{rmk}

\section*{Acknowledgments}
This project was strongly influenced by the 2024 Workshop in Symplectic Topology in Belgrade. We thank its organizers for a wonderful event and R\'emi Leclercq for an inspiring lecture which has introduced us to \cite{CS-tori}. We also thank Leonid Polterovich for introducing us to Lagrangian Poincar\'e recurrence and for numerous helpful comments and discussions. Finally, hearty thanks to Felix Schlenk for finding a mistake in the previous version of this paper and for helpful comments on the exposition. ES was partially supported by the NSERC, Fondation Courtois, and the Alfred P. Sloan foundation. While at the Universit\'e de Montr\'eal, FB was supported by the ISM scholarship and the Fondation Courtois. 

\section{Proof of the main result}

\begin{proof}[Proof of Theorem \ref{thm: main}]
First, it is sufficient to prove the theorem for the standard symplectic vector space $(\R^{2n},\om_{st})$ of dimension $2n \geq 6.$ Indeed, once $L'' \subset \R^{2n}$ and $\varphi'' \in \Ham_c(\R^{2n},\om_{st})$ satisfying the theorem are found, then $L' = r_{s}(L'')$ and $\varphi' = r_{s} \circ \varphi'' \circ r_{s}^{-1}$ can be arranged to be contained, and, respectively, supported in an arbitrarily small neighborhood of the origin for a rescaling $r_{s}: \R^{2n} \to  \R^{2n},$ $x \mapsto s \cdot x$ by a sufficiently small $s > 0.$ 
Now let $B^{2n}(C) \subset \R^{2n}$ be the ball of capacity $C>0.$ Given an arbitrary symplectic manifold $(M,\om)$, and a Darboux chart $D$ in $M$ with a symplectomorphism $\iota: B^{2n}(\delta) \to D,$ we may assume that $L' \subset B^{2n}(\delta)$ and $\varphi'$ is supported in $B^{2n}(\delta).$ Hence the Lagrangian submanifold $L = \iota(L') \subset M$ and $\varphi \in \Ham_c(M,\om)$ given by $\iota \circ \varphi' \circ \iota^{-1}$ on $D$ and by $\id$ outside $D$ satisfy the theorem. 

For a matrix $R \in GL(p,\Z),$ $p \geq 1,$ let $R^*: T^* T^p \to T^* T^p$ denote the symplectomorphism induced by $(R^{-1})^* \times R: T^*\R^p \to T^*\R^p.$ Now, Chekanov \cite[Theorem 3.8]{Chek} constructs a Lagrangian torus $T \subset \R^{2n}$ for $n=k+m,$ $k, m \geq 1$ such that for every $A \in GL(k,\Z)$ there exist: a) neighborhoods $V \subset V'$ of $T$ with a symplectomorphism $I': W_{\eps'} \to V'$ which restricts to $I: W_{\eps} \to V$ for $0<\eps \ll \eps'$, where we set $W_{\sigma} = D^*_{\sigma} T^k \times D^*_{\sigma} T^m \subset T^*T^n,$ for all $\sigma >0$ (we think of $V$ as being very small) and b) a Hamiltonian diffeomorphism $\varphi \in \Ham_c(\R^{2n})$ with the property that $\varphi(V) \subset V'$ and on $V,$  $\varphi = I' \circ (A \times \id)^* \circ I^{-1}.$ Indeed, in the notations of \cite[Proof of Theorem 3.8]{Chek}, it is enough to take a Lagrangian $m$-torus $L \subset T^*B$ sufficiently close to the zero section to achieve this.

Set $k=2$ and $m=n-2 \geq 1,$ $A \in GL(2,\Z)$ given by $A=\begin{bmatrix}
1 & 1\\
1 & 0
\end{bmatrix},$ $B = (A^{-1})^* = \begin{bmatrix}
0 & 1\\
 1 & -1
\end{bmatrix}.$ For $w \in \R^2$ consider the Lagrangian torus $T_w = \{w\} \times T^2 \times \{0\} \times T^m \subset T^* T^2 \times T^*T^m = T^*T^n$ and whenever $T_w \subset W_{\eps'}$ set $L_w = I'(T_w).$ Now let $e = \kappa (1,\la)$ be an eigenvector of $B$ with eigenvalue $\la = \frac{-1 + \sqrt{5}}{2} \in (0,1).$ Here $\kappa > 0$ is chosen to be sufficiently small so that $\{e\} \times T^2 \times \{0\} \times T^m \subset W_{\eps}.$ Consider the Lagrangian torus $L_e = I(\{e\} \times T^2 \times \{0\} \times T^m).$ As $L_e \subset V,$ by construction $\varphi(L_e) = L_{B(e)} = L_{\la e} \subset V.$ By induction, $\varphi^k(L_e) = L_{\la^k e} \subset V$ for all $k \geq 0.$ In particular setting $L=L_e,$ $\varphi^k(L)$ are all pair-wise disjoint. This finishes the proof.\end{proof}

\begin{rmk}
By \cite[Proposition 3.4]{Chek} the tori $L_{\la^k e},$ $k\geq 0,$ produced in our argument are Hamiltonian isotopic to product tori. Recall that a product torus is $T(a_1,\ldots, a_n) = \{ \pi |z_j|^2 = a_j,\; 1\leq j \leq n\} \subset \C^n,$ for certain $a_j > 0,$ $1 \leq j \leq n.$ For example, we may choose the parameters in Chekanov's construction so that for all $k \geq 0,$ $L_{\la^k e}$ is Hamiltonian isotopic to $T_k = T(b+\kappa\la^k, b+\kappa \la^{k+1}, b, \ldots, b).$ It is interesting to note that there does not exist a Hamiltonian isotopy that works for all the $L_{\la^k e}$ simultaneously, as otherwise $T = T(b,\ldots, b),$ which is the Hausdorff limit of the $T_k,$ would be Hamiltonian isotopic to $L_0,$ which the Hausdorff limit of the $L_{\la^k e}.$ However, $T$ and $L_0$ are not Hamiltonian isotopic due to \cite[Theorem 4.2]{Chek}.
\end{rmk}

\vspace{-8pt}
\bibliography{refs}
\bibliographystyle{alpha}

\end{document}